\newtheorem{theorem}{Theorem}[section]
\newtheorem{lemma}[theorem]{Lemma}
\theoremstyle{definition}
\newtheorem{remark}[theorem]{Remark}
\newtheorem{example}[theorem]{Example}
\numberwithin{equation}{section}
\begin{document}
\title[]{Region crossing change on nonorientable surfaces}
\author{Zhiyun Cheng}
\address{School of Mathematical Sciences, Laboratory of Mathematics and Complex Systems, MOE, Beijing Normal University, Beijing, 100875, China}
\email{czy@bnu.edu.cn}
\author{Jingze Song}
\address{School of Mathematical Sciences, Beijing Normal University, Beijing 100875, China}
\email{jzsong@mail.bnu.edu.cn}
\subjclass[2020]{57K10, 57M15}
\keywords{region crossing change, nonorientable surface}

\begin{abstract}
In this paper, we give a classification of link diagrams on nonorientable surfaces up to region crossing changes.
\end{abstract}
\maketitle

\section{Introduction}\label{section1}
An unknotting operation is a local operation on a knot diagram such that any knot diagram can be transformed into a knot diagram representing the unknot via finitely many such operations. One of the most well-studied unknotting operation is crossing change, which switches the overstrand and understrand of a crossing point. The reader is referred to \cite{Sch1998} for a nice expository survey about the role of crossing change in knot theory. Besides of crossing change, there are several other unknotting operations. For example, in \cite{Mur1985} Murakami studied the $\#$-operation and showed that it is an unknotting operation. Later in \cite{MN1989}, Murakami and Nakanishi proved that the $\triangle$-operation is an unknotting operation. The $n$-gon move, which switches all the crossing points alternatingly appear on the boundary of an $n$-gon, was proved to be an unknotting operation by Aida in \cite{Aid1992}. 

\begin{figure}[h]
\centering
\includegraphics{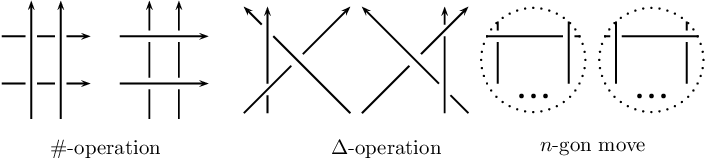}\\
\caption{Some unknotting operations}\label{figure1}
\end{figure}

Recently, a new local operation, called \emph{region crossing change}, was introduced by Shimizu in \cite{Shi2014}. Roughly speaking, for a given knot diagram, applying region crossing change on a region $R$ switches all the crossing points on the boundary of $R$. Figure \ref{figure2} provides an example to show that how to obtain a diagram of the unknot from a diagram of the trefoil knot by applying region crossing change on the region $R$. Let $L$ be a link diagram and $C(L)$ the set of crossing points, for a subset $P\subseteq C(L)$, we say $P$ is \emph{region crossing change admissible} if there exist some regions such that applying region crossing changes on these regions switch all the crossing points in $P$ and preserve all the crossing points in $C(L)\setminus P$. The main result proved by Shimizu in \cite{Shi2014} is that any crossing point in a knot diagram is region crossing change admissible. As a corollary, it follows that region crossing change is an unknotting operation for knot diagrams.

\begin{figure}[h]
\centering
\includegraphics{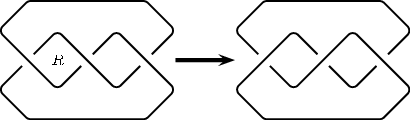}\\
\caption{Region crossing change on $R$}\label{figure2}
\end{figure}

\begin{remark}
The careful reader will have spotted that both the $\#$-operation and the $n$-gon move are special cases of region crossing change. In fact, if one wants to use $\#$-operation to unknot a knot diagram, one needs not only the $\#$-operation but also Reidemeister moves. So are the $\triangle$-operation and the $n$-gon move. For region crossing change, Reidemeister moves are prohibited. In other words, during the process of region crossing changes, the shadow (the underlying 4-valent plane graph, also called the link projection or the medial graph) of the link diagram is actually fixed.
\end{remark}

The research on region crossing change has made considerable progress over the past decade. Shortly after Shimizu proved that region crossing change is an unknotting operation, the first named author proved that region crossing change is an unknotting operation on a link diagram if and only if this link is proper \cite{CG2012, Che2013}. Recall that a link is called \emph{proper} if for any of its components, the sum of the linking numbers between this component and other components is an even integer. It turns out that whether region crossing change is an unknotting operation does not depend on the choice of the link diagram. Shimizu's result is equivalent to the fact that the incidence matrix (see Section \ref{section2} for the definition) is of full rank over $\mathbb{Z}_2$. It immediately follows that it is of full rank over $\mathbb{Z}$, therefore the integral region crossing change is also an unknotting operation on knot diagrams \cite{AS2012}. In \cite{HSS2015, ST2020}, the effect of region crossing change on spatial graph diagrams is studied. As a complement of region crossing change, the region freeze crossing change, which preserves the crossing points on the boundary of a region and switches all others, was also proved to be an unknotting operation on knot diagrams \cite{IS2016}. Very recently, the operation of region crossing change was extended to origami by Oshikiri, Shimizu and Tamura in \cite{OST2024}.

The original region crossing change is an local operation on link diagrams on the plane, or equivalently $S^2$. It is a natural question to consider the region crossing change on surfaces of arbitrary high genus. By using Bollobás-Riordan-Tutte polynomial, this question was first addressed by Dasbach and Russell in \cite{DR2018} with some restricted conditions. Later, a complete classification of link diagrams on $\Sigma_g$ modulo region crossing change was given in \cite{CCXZ2022}, where $\Sigma_g$ denotes a closed orientable surface of genus $g$. Currently, the only case that has not been conquered is the classification of link diagrams on nonorientable surfaces. More precisely, given a link diagram $L$ on $N_g$ with $c$ crossing points, where $N_g$ denotes the connected sum of $g$ real projective plane, by switching these crossing points one obtains totally $2^c$ different link diagrams. What we want to know is, how many equivalence classes do these link diagrams have under the equivalence relation induced by the region crossing change? It turns out that this number is equal to $2^{c-\text{rank}(M_L)}$. Here $M_L\in M_{r\times c}(\mathbb{Z}_2)$ denotes the incidence matrix of $L$, and $r$ is the number of regions of $N_g\setminus L$. The main result of this paper is the following.

\begin{theorem}\label{Theorem1}
Let $L=K_1\cup\cdots\cup K_n$ be a link diagram on $N_g$, then $\text{rank}(M_L)=r-n-1+\text{rank}(N_L)$.
\end{theorem}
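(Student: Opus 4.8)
The plan is to compute the rank of $M_L$ through its left kernel over $\mathbb{Z}_2$. Since $\operatorname{rank}(M_L)=r-\dim_{\mathbb{Z}_2}K$, where $K=\{v\in\mathbb{Z}_2^r : v^{\mathsf T}M_L=0\}$ is the space of linear relations among the region rows, it suffices to prove $\dim K=n+1-\operatorname{rank}(N_L)$. I would first reinterpret a vector $v\in K$ as a $\mathbb{Z}_2$-valued function that is constant on each region and satisfies, at every crossing, the relation $v_{R_1}+v_{R_2}+v_{R_3}+v_{R_4}=0$, where $R_1,R_2,R_3,R_4$ are the four regions (counted with multiplicity) meeting that crossing. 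The useful observation is that, writing the \emph{jump} of $v$ across an edge $e$ as $j_v(e)=v_R+v_{R'}$ for the two regions $R,R'$ flanking $e$, this crossing relation says exactly that each of the two strands through the crossing carries the same jump on both of its edges; that is, the jump is preserved when one travels straight through a crossing.

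The crux is the resulting structural description of $K$. Since $j_v$ is constant along each edge and is preserved through every crossing along the straight-ahead strand, it is constant along each component $K_i$, with a single value $\epsilon_i(v)\in\mathbb{Z}_2$. This defines a linear map $\Psi:K\to\mathbb{Z}_2^n$, $v\mapsto(\epsilon_1(v),\dots,\epsilon_n(v))$. Its kernel consists of those $v$ whose jump across every edge vanishes; because $N_g$ is connected, such a $v$ is globally constant, so $\ker\Psi=\langle\mathbf 1\rangle$ is one-dimensional. Hence $\dim K=1+\dim\operatorname{im}\Psi$, which already isolates the summand $n+1$ and reduces everything to identifying $\operatorname{im}\Psi$.

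It then remains to determine which jump tuples $(\epsilon_1,\dots,\epsilon_n)$ are realizable. Prescribing the jump to be $\epsilon_i$ along $K_i$ defines a $\mathbb{Z}_2$ $1$-cochain on the cell decomposition induced by $L$, and a function $v$ producing these jumps exists iff that cochain is a coboundary, i.e. iff its total value around every closed loop $\gamma$ transverse to $L$ vanishes. Since the total jump around $\gamma$ equals $\sum_i\epsilon_i\,(\gamma\cdot K_i)$ in terms of mod-$2$ intersection numbers, and this depends only on the homology class of $\gamma$, realizability is equivalent to $\sum_i\epsilon_i[K_i]=0$ in $H_1(N_g;\mathbb{Z}_2)$. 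Invoking $\mathbb{Z}_2$-Poincar\'e duality—valid even though $N_g$ is nonorientable—the intersection pairing on $H_1(N_g;\mathbb{Z}_2)$ is nondegenerate, so this single homological equation captures all the constraints. Therefore $\operatorname{im}\Psi=\ker\big(\epsilon\mapsto\sum_i\epsilon_i[K_i]\big)$ has dimension $n-\operatorname{rank}(N_L)$, where $N_L$ is the matrix recording the classes $[K_i]$ from Section~\ref{section2}, and combining gives $\dim K=n+1-\operatorname{rank}(N_L)$, hence the claimed rank formula.

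I expect the main obstacles to be the two geometric steps: verifying carefully that the jump is genuinely preserved along the straight-ahead strand at every crossing (including self-crossings of a single component as well as crossings between two different components), and confirming the realizability criterion—in particular that the relevant loops range over a full basis of $H_1(N_g;\mathbb{Z}_2)$ and that the mod-$2$ intersection form is nondegenerate in the nonorientable setting. A minor point to handle is the standing assumption that the complementary regions form a genuine cell decomposition, which is what permits the reduction of the loop count to homology.
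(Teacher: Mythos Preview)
Your argument is correct and takes a genuinely different route from the paper's proof. The paper does \emph{not} compute $\ker M_L^{\mathsf T}$ directly. Instead it reduces the nonorientable case to the already-established orientable Theorem~\ref{Theorem2}: after possibly adding one extra cross-cap to make the genus even, it writes $N_{2g}$ via a polygon whose last four edges spell a Klein-bottle word $a_gb_ga_gb_g^{-1}$, performs Reidemeister~II moves to isolate the $m$ arcs meeting $a_g$ (using Lemma~\ref{lemma3} that $r-\operatorname{rank}(M_L)$ is $\Omega_2$-invariant), and then applies a half-twist that turns the word into $a_gb_ga_g^{-1}b_g^{-1}$, producing a diagram $L''$ on $\Sigma_g$. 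A lengthy block-matrix computation shows $\operatorname{rank}(M_{L''})=\operatorname{rank}(M_{L'})+\tfrac{m^2-m}{2}$, and the formula follows from Theorem~\ref{Theorem2}.

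Your approach is more conceptual and has the pleasant feature of proving Theorems~\ref{Theorem1} and~\ref{Theorem2} simultaneously: nothing in the jump argument or in the realizability criterion $\sum_i\epsilon_i[K_i]=0$ uses orientability, since Poincar\'e duality over $\mathbb{Z}_2$ (hence nondegeneracy of the mod-$2$ intersection form on $H_1$) holds for any closed surface. It also avoids the dependence on the earlier paper and the large explicit matrices $M_{13},M_{23},M_{41},M_{43}$. What the paper's approach buys is that it is entirely elementary linear algebra once the half-twist picture is set up, requiring no (co)homology. Regarding your flagged ``minor point'': the cellularity hypothesis is in fact unnecessary for your argument, because the obstruction to integrating the jump cochain along paths is exactly $\sum_i\epsilon_i(\gamma\cdot K_i)$, and mod-$2$ intersection number with the closed cycle $K_i$ depends only on $[\gamma]\in H_1(N_g;\mathbb{Z}_2)$ regardless of whether the complementary regions are disks; even the degenerate case where an edge of $K_i$ has the same region on both sides is captured, since the short loop crossing that edge once pairs nontrivially with $[K_i]$ and hence is nonzero in $H_1$.
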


Here $r$ denotes the number of regions and $N_L\in M_{n\times g}(\mathbb{Z}_2)$ is the homology matrix of $L$ (see Section \ref{section2} for the definition).

The rest of this paper is arranged as follows. In Section \ref{section2} we recall the definitions of incidence matrix $M_L$ and homology matrix $N_L$, and explain the roles they play in the investigation of region crossing change. Section \ref{section3} is devoted to give a proof of Theorem \ref{Theorem1}. Later, for a given subset $P$ of the set of crossing points we discuss when the crossing points in $P$ are region crossing change admissible. Together with the result of Theorem \ref{Theorem1}, these provide a complete understanding to the effect of region crossing change on surfaces.

\section{Incidence matrix and homology matrix}\label{section2}
\subsection{Region crossing change}
Let $F$ be a connected closed surface, not necessary orientable, and $L$ a link diagram on $F$. Choose a region $R$ from $F\setminus L$, taking \emph{region crossing change} on $R$ means to switch each crossing point on the boundary of $R$ $k$ times if $R$ appears $k$ times around this crossing point. For example, Figure \ref{figure3} depicts a 6-component link diagram on the torus. Consider the five crossing points $c_k$ $(0\leq k \leq 4)$, then $c_k$ will be switched $k$ times if one applies region crossing change on the region $R$, since $R$ appears $k$ times around the crossing point $c_k$ $(0\leq k \leq 4)$.

\begin{figure}[h]
\centering
\includegraphics{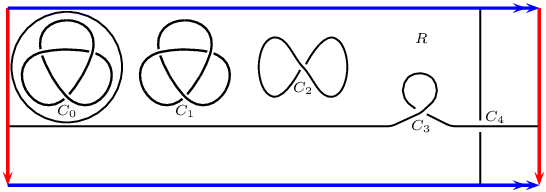}\\
\caption{A 6-component link diagram on $T^2$}\label{figure3}
\end{figure}

\begin{remark}
In the original definition of region crossing change introduced by Shimizu in \cite{Shi2014}, each crossing point on $\partial R$ will be switched exactly once if one applies region crossing change on $R$, no matter how many times $R$ appear around it. For example, if a knot diagram on the plane contains a nugatory crossing point, then it will be changed if one applies region crossing change on the unbounded region. In \cite{AS2012}, this is called \emph{single counting rule}. The adoption of the counting rule used in the present paper corresponds to the \emph{double counting rule} in \cite{AS2012}.
\end{remark}

\begin{remark}
It should be noted that if we adopt the double counting rule, just as what we choose in this paper, the result that region crossing change is an unknotting operation for knot diagrams can be obtained from the original definition of Alexander polynomial introduced in \cite{Ale1928}, together with the fact that knots have odd determinants.
\end{remark}

\subsection{Incidence matrix and Homology matrix}
Given a link diagram $L=K_1\cup\cdots\cup K_n$ on a connected closed surface $F$, let us use $c_1, \cdots, c_c$ and $R_1, \cdots, R_r$ to denote the crossing points and regions, respectively. The \emph{incidence matrix} of $L$, which is denoted by $M_L$, is defined as $M_L=(a_{ij})_{r\times c}\in M_{r\times c}(\mathbb{Z}_2)$, where $a_{ij}$ is equal to 1 if $R_i$ appears odd times around $c_j$ and 0 otherwise. The incidence matrix precisely demonstrates the effect of region crossing change on link diagrams. For example, if we want to understand the effect of applying region crossing changes on $R_i$ $(i\in P)$, where $P$ is a subset of $\{1, \cdots, r\}$, we just need to consider the sum $\sum\limits_{i\in P}r_i\in M_{1\times c}(\mathbb{Z}_2)$. Here $r_i$ denotes the row vector of $M_L$ corresponding to $R_i$. As another example, the fact that region crossing change is an unknotting operation for knot diagrams on the plane is equivalent to the fact that the incidence matrix of any knot diagram is of full rank.

For the homology matrix, let us first assume that the surface $F$ is orientable, say $F=\Sigma_g=gT^2$. Denote a basis of $H_1(\Sigma_g; \mathbb{Z}_2)=\bigoplus\limits_{2g}\mathbb{Z}_2$ by $a_1, \cdots, a_{2g}$ and the homology class of each component $K_i$ $(1\leq i\leq n)$ by $[K_i]$. If $[K_i]=\sum\limits_{j=1}^{2g}b_{ij}a_j$, then we define the \emph{homology matrix} of $L$ to be $N_L=(b_{ij})_{n\times 2g}\in M_{n\times 2g}(\mathbb{Z}_2)$. If $F=N_g=g\mathbb{R}P^2$, which is nonorientable, we choose a basis of $H_1(g\mathbb{R}P^2; \mathbb{Z}_2)=\bigoplus\limits_g\mathbb{Z}_2$, say $a_1, \cdots, a_g$. Then the \emph{homology matrix} is defined as $N_L=(b_{ij})_{n\times g}\in M_{n\times g}(\mathbb{Z}_2)$ if $[K_i]=\sum\limits_{j=1}^gb_{ij}a_j$. Obviously, the rank of homology matrix $N_L$ does not depend on the choice of the basis.

\begin{example}
Consider the two link diagrams illustrated in Figure \ref{figure4}. On the left side we have a 3-component link diagram $L_1$ on $T^2$ and the right side depicts a 2-component link diagram $L_2$ on $2\mathbb{R}P^2$. Since the information of over/undercrossing of each crossing point is of no importance, we omit it here. Now we have
\begin{center}
$M_{L_1}=\begin{pmatrix}
1&1&0&1&1&0\\
1&1&0&0&0&1\\
1&1&0&1&1&0\\
1&1&1&0&0&0\\
0&0&1&1&1&0\\
0&0&0&1&1&1
\end{pmatrix},
N_{L_1}=\begin{pmatrix}
1&0\\
1&0\\
1&0
\end{pmatrix}$,\\
$M_{L_2}=\begin{pmatrix}
1&1&0&1&1&0\\
1&1&0&0&0&1\\
1&0&1&1&1&0\\
1&1&1&0&0&0\\
0&0&1&1&1&0\\
0&1&1&1&1&1
\end{pmatrix},
N_{L_2}=\begin{pmatrix}
1&0\\
0&0
\end{pmatrix}.$
\end{center}
Direct calculation shows that rank$(M_{L_1})=3$, rank$(N_{L_1})=1$, rank$(M_{L_2})=4$, and rank$(N_{L_2})=1$. For $i\in\{1, 2\}$, we have rank$(M_{L_i})=r_i-n_i-1+\text{rank}(N_{L_i})$. Here $r_i$ and $n_i$ denote the number of regions and the number of components of $L_i$. 
\begin{figure}
\centering
\includegraphics{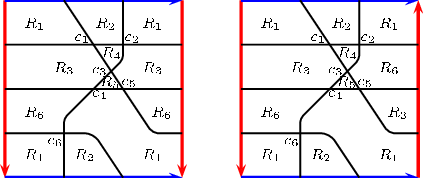}\\
\caption{A 3-component link diagram on $T^2$ and a 2-component link diagram on Klein bottle}\label{figure4}
\end{figure}
\end{example}

Given a link diagram $L$ with $c$ crossing points, fix an orientation of each component of $L$ and an order of all crossing points. We assign an element $0\in\mathbb{Z}_2$ to each positive crossing point and $1\in\mathbb{Z}_2$ to each negative crossing point. Now the link diagram $L$ corresponds to a row vector $r_L\in M_{1\times c}(\mathbb{Z}_2)$. It is easy to observe that applying region crossing change on $R_i$ turns $L$ into a new link diagram corresponds to $r_L+r_i$. Here $r_i$ denotes the row vector corresponds to $R_i$. Thus two link diagrams $L$ and $L'$ are related by some region crossing changes if and only if there exists a subset $P\subseteq\{1, \cdots, r\}$ such that $r_{L'}=r_L+\sum\limits_{i\in P}r_i$. It follows immediately that, among all the $2^c$ link diagrams sharing the same shadow with $L$, the number of equivalent classes of link diagrams modulo region crossing changes is equal to $2^{c-\text{rank}M_L}$. 

For link diagrams on orientable surfaces, we have the following result.

\begin{theorem}\cite{CCXZ2022}\label{Theorem2}
Let $L=K_1\cup\cdots\cup K_n$ be a link diagram on $\Sigma_g$, then 
\begin{center}
$\text{rank}(M_L)=r-n-1+\text{rank}(N_L)$.
\end{center}
Here $r$ denotes the number of regions.
\end{theorem}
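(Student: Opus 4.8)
The plan is to compute the corank of $M_L$ instead of its rank. The rows of $M_L$ are the region vectors $r_1,\dots,r_r\in\mathbb{Z}_2^{c}$, and a vector $x\in\mathbb{Z}_2^{r}$ satisfies $\sum_i x_i r_i=0$ precisely when the set $P=\{R_i:x_i=1\}$ is \emph{inert}: applying region crossing changes on all regions of $P$ switches every crossing an even number of times. Writing $T\subseteq\mathbb{Z}_2^{r}$ for the space of inert sets, one has $\text{rank}(M_L)=r-\dim T$, so the theorem is equivalent to $\dim T=n+1-\text{rank}(N_L)$. I would work throughout with the $\mathbb{Z}_2$ region chain complex of $N_g$ cut out by $L$, with chain groups $C_2,C_1,C_0$ freely generated by the regions, arcs and crossings and boundary maps $\partial_2,\partial_1$, where $\partial_2 R_i$ is the sum of the arcs having exactly one side in $R_i$. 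A first observation is that $\ker\partial_2$ is $1$-dimensional: the relation $\partial_2(\sum_{i\in Q}R_i)=0$ says no arc has exactly one side in $\bigcup_{i\in Q}R_i$, so that union is open and closed, and since $N_g$ is connected $Q$ is empty or everything; thus $\ker\partial_2=\langle F\rangle$ where $F$ is the sum of all regions, and $F\in T$ because each crossing has four corners.

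The heart of the argument is a purely local reformulation of inertness. Fix a crossing $c$, let $e_1,e_2,e_3,e_4$ be its incident arcs in cyclic order (the two strands through $c$ being $\{e_1,e_3\}$ and $\{e_2,e_4\}$), and let $x_{12},x_{23},x_{34},x_{41}\in\mathbb{Z}_2$ record whether the four corners of $c$ lie in $P$. By definition $P$ switches $c$ an even number of times iff $x_{12}+x_{23}+x_{34}+x_{41}=0$. On the other hand, the two regions adjacent to an arc are the regions of the two corners flanking it at $c$, so $e_1$ and $e_3$ lie in $\partial_2(\sum_{i\in P}R_i)$ according to the values $x_{41}+x_{12}$ and $x_{23}+x_{34}$ respectively; these agree exactly when $x_{12}+x_{23}+x_{34}+x_{41}=0$. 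Hence $P$ is inert at $c$ iff the two arcs of each strand receive the same membership in $\partial_2(\sum_{i\in P}R_i)$. Propagating this along a strand, which runs straight through every crossing, shows that $P$ is inert iff that membership is constant along each component, that is, iff $\partial_2(\sum_{i\in P}R_i)=\sum_{i\in S}K_i$ for some subset $S\subseteq\{1,\dots,n\}$.

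It then remains to identify $\partial_2(T)$ and count. Since $\partial_2(\sum_{i\in P}R_i)$ is always a $\mathbb{Z}_2$-boundary, the subset $S$ attached to an inert $P$ satisfies $\sum_{i\in S}[K_i]=0$ in $H_1(N_g;\mathbb{Z}_2)$, so $S$ lies in the kernel of the map $s\mapsto\sum_i s_i[K_i]$, a subspace of dimension $n-\text{rank}(N_L)$. Conversely, for each such $S$ the cycle $\sum_{i\in S}K_i$ is null-homologous mod $2$ and therefore bounds a union $P_S$ of regions, obtained by $2$-colouring the regions so that the colour changes exactly across $\sum_{i\in S}K_i$; this colouring is well defined because any dual loop meets $\sum_{i\in S}K_i$ evenly. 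By the local criterion each $P_S$ is inert, and since the $K_i$ have disjoint arc supports the assignment $S\mapsto\sum_{i\in S}K_i$ is injective, giving $\dim\partial_2(T)=n-\text{rank}(N_L)$. Combined with $\ker(\partial_2|_T)=T\cap\langle F\rangle=\langle F\rangle$ of dimension $1$, this yields $\dim T=1+(n-\text{rank}(N_L))$ and hence the theorem.

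The step I expect to be the main obstacle---and the only place where nonorientability really enters---is the construction of the inert sets $P_S$. On an orientable surface one would simply take $P_S$ to be the regions on a fixed side of the oriented multicurve $\sum_{i\in S}K_i$, but $N_g$ carries no global notion of side. The fix is to phrase everything over $\mathbb{Z}_2$ and replace ``side'' by the mod-$2$ colouring above; its consistency is exactly the condition that $\sum_{i\in S}K_i$ is null-homologous mod $2$, which is detected by the $\mathbb{Z}_2$ intersection form. That form is nondegenerate on $H_1(N_g;\mathbb{Z}_2)$ by Poincaré duality, which holds for every closed surface regardless of orientability, so the count of null-homologous component-subsets is still governed by $\text{rank}(N_L)$. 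One should also note that regions of $N_g\setminus L$ need not be disks, but this is harmless: only connectivity of $N_g$ is used to compute $\ker\partial_2$, and the crossing computation is insensitive to the global shape of the regions.
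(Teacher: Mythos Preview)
Your argument is correct and in fact proves both Theorem~\ref{Theorem2} and Theorem~\ref{Theorem1} at once: although the statement you were given concerns $\Sigma_g$, you work on $N_g$ throughout, and nothing in the proof uses orientability---the only surface input is nondegeneracy of the mod-$2$ intersection form, which Poincar\'e duality supplies on any closed surface. The paper, by contrast, does not prove Theorem~\ref{Theorem2} at all; it is imported from \cite{CCXZ2022} as a black box. The paper then establishes Theorem~\ref{Theorem1} by a combinatorial reduction to that orientable result: after $\Omega_2$-moves isolating the arcs that meet one edge of a polygonal presentation of $N_{2g}$ (Lemma~\ref{lemma3}), a half-twist converts $N_{2g}$ into $\Sigma_g$, and an explicit block-matrix computation shows that the rank of the incidence matrix changes by exactly the number $\frac{m^2-m}{2}$ of new crossings.

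Your route is therefore genuinely different and more conceptual. Identifying the left kernel of $M_L$ with the space $T$ of inert region-sets, and then characterising $T$ as the $\partial_2$-preimage of the span of the $K_i$, replaces all of the paper's matrix bookkeeping by the two-line count $\dim T=\dim\ker(\partial_2|_T)+\dim\partial_2(T)=1+(n-\text{rank}(N_L))$. This also makes the connection to the ineffective-set description of \cite{IS2016,CMR2020} transparent (your $T$ is exactly that space) and explains \emph{why} the homology matrix enters the formula. What the paper's approach buys is an explicit, hands-on picture of how $M_L$ transforms under the half-twist; what yours buys is a uniform proof that renders the reduction to the orientable case unnecessary. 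One small remark: your local computation at a crossing tacitly uses the double counting rule adopted in the paper (corners are counted with multiplicity), so it would be worth saying so explicitly.
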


If $K$ is a knot diagram on the plane, then $n=1, r=c+2$ and rank$(N_K)=0$. Hence the rank of $M_K$ equals $c$, which recovers the result of Shimizu with respect to the double counting rule.

\begin{remark}
The number of equivalence classes of all the link diagrams on $\Sigma_g$ sharing the same shadow was first studied by Dasbach and Russell in \cite{DR2018}. Under the assumption that the link diagram is cellularly embedded which admits a checkerboard fashion coloring, Dasbach and Russell provided a formula to calculate the number of equivalence classes (hence the rank of $M_L$) via the Tait graph. The reader is referred to \cite{DR2018} for more details.
\end{remark}

\section{The proof of the main theorem}\label{section3}
The main idea of the proof of Theorem \ref{Theorem1} is to convert a nonorientable surface into an orientable one, and then use the result of Theorem \ref{Theorem2}. Before giving the proof of Theorem \ref{Theorem1}, we need the following lemma.

\begin{lemma}\label{lemma3}
Let $L$ be a link diagram on $N_g$ with $r$ regions, then $r-\text{rank}(M_L)$ is invariant under the second Reidemeister move.
\end{lemma}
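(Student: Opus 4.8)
The plan is to fix a single second Reidemeister move, relating a diagram $L$ to the diagram $L'$ obtained from it by creating a bigon, and to show that $\text{rank}(M_{L'})-\text{rank}(M_L)$ equals the change $r'-r$ in the number of regions; since $r-\text{rank}(M_L)$ is what we are tracking, this gives the invariance. The inverse move, which deletes a bigon, then follows automatically, and the various orientations and positions of the move are interchanged by relabelling the local picture, so it suffices to treat one. First I would set up the local data. Performing the move inside a disk $D$ produces two new crossings $c'$ and $c''$ and a new bigon region $B$, and the three regions meeting $D$ beforehand --- call them $P$ (the side from which $B$ is pinched off), $Q$ (the region lying between the two strands), and $S$ (the opposite side) --- are modified: $P$ sheds $B$, $S$ is unchanged up to the local surgery, and $Q$ is cut into two pieces $Q_1$ (meeting $c'$) and $Q_2$ (meeting $c''$), which may or may not be the same global region.

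Next I would record the incidence data relating $M_{L'}$ to $M_L$. The columns of $M_{L'}$ are those of $M_L$ together with two new columns for $c'$ and $c''$. Reading off the four sectors at each new crossing gives that $c'$ has odd incidence precisely with $\{B,P,Q_1,S\}$ and $c''$ precisely with $\{B,P,Q_2,S\}$; in particular the bigon contributes the row $\mathbf e_{c'}+\mathbf e_{c''}$, and the two new columns satisfy $\text{col}_{c'}+\text{col}_{c''}=\mathbf 1_{Q_1}+\mathbf 1_{Q_2}$ (the contributions of $B,P,S$ cancel mod $2$, since each meets both new crossings once). On the old columns, splitting a region preserves the mod-$2$ corner count at every old crossing, so $\text{row}_{Q_1}+\text{row}_{Q_2}$ and $\text{row}_{Q}$ agree there, while $\text{row}_P$ and $\text{row}_S$ are unchanged.

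The rank computation is then short. Project the row space of $M_{L'}$ onto the two new coordinates. Using the bigon row to clear the new entries of the rows of $P$, $S$ and of $Q_1+Q_2$, I would identify the kernel of this projection, restricted to the old columns, with the row space of $M_L$ (the relation $\text{row}_{Q_1}+\text{row}_{Q_2}=\text{row}_Q$ is exactly what lets $Q_1,Q_2$ re-enter only in the combination yielding $\text{row}_Q$), and check that this restriction is injective; hence the kernel has dimension $\text{rank}(M_L)$. The image of the projection has dimension $d=\text{rank}[\,\text{col}_{c'}\ \text{col}_{c''}\,]\in\{1,2\}$, equal to $1$ exactly when $\mathbf 1_{Q_1}=\mathbf 1_{Q_2}$, i.e. when $Q_1=Q_2$ as regions. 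Therefore $\text{rank}(M_{L'})=\text{rank}(M_L)+d$. Finally I would compute $r'-r$ by the same dichotomy: working in the complement of $D$, the regions of $L$ and of $L'$ are assembled from the fixed outside piece $G$ by gluing the local pieces along the four arcs in which the strands meet $\partial D$; the only piece that can bridge two components of $G$ is the middle strip of $L$ (glued along both $Q$-arcs), whereas for $L'$ no piece bridges and the bigon is always one extra region. This yields $r'-r=1+[\,Q_1\neq Q_2\,]=d$, so $r'-\text{rank}(M_{L'})=r-\text{rank}(M_L)$.

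The step I expect to be the main obstacle is the bookkeeping in the degenerate configurations, where the local regions $P,Q,S$ (or $Q_1,Q_2,B$) coincide as global regions, connect to one another around the complement, or reconnect after the move; a priori these could alter both the rank jump and the region count. The point that makes the argument go through uniformly is that every identity used above is either a mod-$2$ statement about the sectors at a crossing (hence insensitive to which global regions the sectors belong to) or the connectivity statement about $G$, and crucially both $d$ and $r'-r$ collapse to the single question of whether the two sides of $Q$ remain connected in the complement of $D$. Verifying carefully that this dichotomy governs the two quantities simultaneously in all coincidence patterns --- including checking injectivity of the restriction map in the presence of coincidences --- is where the care is needed.
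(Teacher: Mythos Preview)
Your approach is correct and is essentially the paper's: both write $M_{L'}$ with the bigon row and the two new crossing columns and then reduce to show that $\text{rank}(M_{L'})-\text{rank}(M_L)$ equals the change in the region count (the paper via explicit row operations, you via the kernel--image decomposition of the projection onto the new coordinates). The paper treats the non-degenerate case in detail and then simply remarks that coinciding regions are handled by combining the duplicated rows, so your explicit $Q_1=Q_2$ versus $Q_1\neq Q_2$ dichotomy is in fact slightly more careful than what appears there.
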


\begin{figure}
\centering
\includegraphics{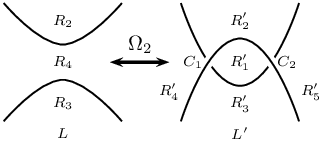}\\
\caption{Reidemeister move $\Omega_2$}\label{figure5}
\end{figure}

\begin{proof}
The proof is essentially the same to that of the orientable case given in \cite[Theorem 3.4]{CCXZ2022}. We sketch it here.

Let us use $L'$ to denote the new link diagram obtained from $L$ by a second Reidemeister move, and use $c_1, c_2$ to denote the two new-born crossing points. For the newly emerging bigon, we use $R_1'$ to denote it. See Figure \ref{figure5}. Assume $R_2', R_3', R_4', R_5'$ indicate distinct four regions of $L'$, then $L'$ has two more regions than $L$, it is sufficient to show that $\text{rank}(M_L)+2=\text{rank}(M_{L'})$.

One computes
\begin{center}
$\text{rank}(M_{L'})=\text{rank}\begin{pmatrix}
1&1&0&\cdots&0\\
1&1&&r_2'&\\
1&1&&r_3'&\\
1&0&&r_4'&\\
0&1&&r_5'&\\
\vdots&\vdots&&&
\end{pmatrix}=\text{rank}\begin{pmatrix}
1&1&0&\cdots&0\\
0&0&&r_2'&\\
0&0&&r_3'&\\
0&0&&r_4'+r_5'&\\
0&1&&r_5'&\\
\vdots&\vdots&&&
\end{pmatrix}=\text{rank}\begin{pmatrix}
1&1&0&\cdots&0\\
0&1&&r_5'&\\
0&0&&&\\
0&0&&M_L&\\
0&0&&&\\
\vdots&\vdots&&&
\end{pmatrix}=\text{rank}(M_L)+2$.
\end{center}

It is worthy to point out that it is possible some of $\{R_2', R_3', R_4', R_5'\}$ indicate actually the same region of $L'$. It suffices to combine the corresponding row vectors into one row vector, the proof above still holds.
\end{proof}

Now we turn to the proof of Theorem \ref{Theorem1}.

\begin{proof}
Suppose we are given a link diagram $L$ on $N_g$. Without loss of generality, let us assume that $g$ is even. If not, we can make a cross-cap addition to $N_g$ such that the excised disk does not intersect with the link diagram $L$. Note that the addition of an $\mathbb{R}P^2$ does not affect all the terms appearing in the equation in the statement of Theorem \ref{Theorem1}. Thus, from now on $L$ is considered as a link diagram on $N_{2g}$. Let us choose a polygonal presentation of $N_{2g}$ as below
\begin{center}
$\Pi=a_1b_1a_1^{-1}b_1^{-1}\cdots a_{g-1}b_{g-1}a_{g-1}^{-1}b_{g-1}^{-1}a_gb_ga_gb_g^{-1}$.
\end{center}
In other words, here we regard the surface $N_{2g}$ as $(g-1)T^2\#K$, where $K$ denotes the Klein bottle. 

Assume there are $m$ arcs of $L$ having nonempty intersection with the edge $a_g$ between $b_g$ and $b_g^{-1}$. After taking a sequence of the second Reidemeister moves on these arcs we obtain a new link diagram $L'$ with $2m-2$ new crossing points. Let us use $c_1, \cdots, c_{m-1}$ to denote the lower $m-1$ crossing points, and $R_1, \cdots, R_m$ to denote the regions incident to $a_g$, see Figure \ref{figure6}. This purpose is to make the next half twist operation only affects the regions near $a_g$. According to Lemma \ref{lemma3}, $r-\text{rank}(M_L)$ is preserved under the second Reidemeister move. On the other hand, it is obvious that the number of components and rank$(N_L)$ are also preserved. Therefore it suffices to consider the link diagram $L'$.

Now we take a half twist on these $m$ arcs, which reverses the direction of $a_g$ (and also the positions of $R_2,\cdots, R_m$). It follows that now the polygonal presentation has the form
\begin{center}
$\Pi'=a_1b_1a_1^{-1}b_1^{-1}\cdots a_{g-1}b_{g-1}a_{g-1}^{-1}b_{g-1}^{-1}a_gb_ga_g^{-1}b_g^{-1}$,
\end{center}
which represents $gT^2$. The new link diagram $L''$ has $\frac{m^2-m}{2}$ more crossing points and $\frac{m^2-m}{2}$ more regions than that of $L'$. Assume $L'$ has $c$ crossing points and $r$ regions, let us use $c_{c+1}, \cdots, c_{c+\frac{m^2-m}{2}}$ and $R_{r+1}, \cdots, R_{r+\frac{m^2-m}{2}}$ to denote the new crossing points and new regions created by the half twist, see Figure \ref{figure6}. Note that the half twist preserves the number of components and the rank of homology matrix.

\begin{figure}[h]
\centering
\includegraphics{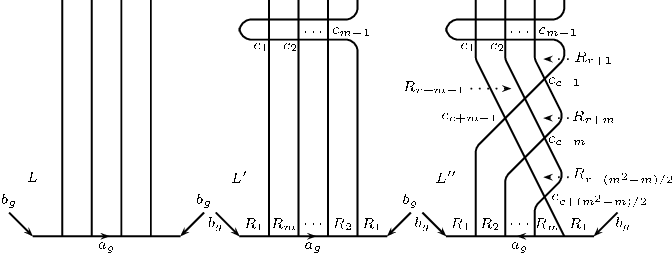}\\
\caption{Transformation from $L$ to $L''$}\label{figure6}
\end{figure}

As a link diagram on $gT^2$, according to Theorem \ref{Theorem2}, for $L''$ we have
\begin{center}
$\text{rank}(M_{L''})=r+\frac{m^2-m}{2}-n-1+\text{rank}(N_{L''})$.
\end{center}
Since $\text{rank}(N_{L'})=\text{rank}(N_{L''})$, in order to prove Theorem \ref{Theorem1}, it is sufficient to show that  
\begin{center}
$\text{rank}(M_{L''})=\text{rank}(M_{L'})+\frac{m^2-m}{2}$.
\end{center}

Let us divide the set of regions of $L'$ into $\{R_1\}\cup\{R_2, \cdots, R_m\}\cup\{R_{m+1}, \cdots, R_r\}$, and divide the set of crossing points of $L'$ into $\{c_1, \cdots, c_{m-1}\}\cup\{c_m, \cdots, c_c\}$, then the incidence matrix $M_{L'}$ has the block form
\begin{center}
$M_{L'}=\begin{pmatrix}
M_{11}&M_{12}\\
M_{21}&M_{22}\\
M_{31}&M_{32}
\end{pmatrix}$,
\end{center}
where 
\begin{center}
$M_{11}=\begin{pmatrix}
1&0&\cdots&0&1
\end{pmatrix}$, and
$M_{21}=\begin{pmatrix}
0&0&0&\cdots&0&0&1\\
0&0&0&\cdots&0&1&1\\
\vdots&\vdots&\vdots&&\vdots&\vdots&\vdots\\
0&1&1&\cdots&0&0&0\\
1&1&0&\cdots&0&0&0
\end{pmatrix}$.
\end{center}
With the partition of the regions of $L''$ as 
\begin{center}
$\{R_1\}\cup\{R_2, \cdots, R_m\}\cup\{R_{m+1}, \cdots, R_r\}\cup\{R_{r+1}, \cdots, R_{r+\frac{m^2-m}{2}}\}$,
\end{center}
and the partition of the crossing points of $L''$ as 
\begin{center}
$\{c_1, \cdots, c_{m-1}\}\cup\{c_m, \cdots, c_c\}\cup\{c_{c+1}, \cdots, c_{c+\frac{m^2-m}{2}}\}$,
\end{center}
the incidence matrix of $L''$ has the following block from
\begin{center}
$M_{L''}=\begin{pmatrix}
M_{11}&M_{12}&M_{13}\\
\textbf{0}&M_{22}&M_{23}\\
M_{31}&M_{32}&\textbf{0}\\
M_{41}&\textbf{0}&M_{43}
\end{pmatrix}$.
\end{center}
Here
\begin{center}
$M_{13}=\begin{pmatrix}
1&\textbf{0}_{1\times(m-3)}&1&1&\textbf{0}_{1\times(m-3)}&1&\textbf{0}_{1\times(m-2)}&\cdots&0&1
\end{pmatrix}$,
$M_{23}=\begin{pmatrix}
\textbf{0}_{1\times(m-2)}&1&\textbf{0}_{1\times(m-3)}&1&\textbf{0}_{1\times\frac{m^2-5m+6}{2}}\\
\textbf{0}_{1\times(2m-4)}&1&\textbf{0}_{1\times(m-4)}&1&\textbf{0}_{1\times\frac{m^2-7m+12}{2}}\\
\vdots&\vdots&\vdots&\vdots&\vdots\\
0&0&\cdots&0&1
\end{pmatrix}$,
$M_{41}=\begin{pmatrix}
0&0&0&\cdots&0&0&1\\
0&0&0&\cdots&0&1&1\\
\vdots&\vdots&\vdots&&\vdots&\vdots&\vdots\\
0&1&1&\cdots&0&0&0\\
1&1&0&\cdots&0&0&0\\
0&0&0&\cdots&0&0&0\\
\vdots&\vdots&\vdots&\vdots&\vdots&\vdots&\vdots\\
0&0&0&\cdots&0&0&0
\end{pmatrix}=
\begin{pmatrix}
M_{21}\\
\textbf{0}_{\frac{m^2-3m+2}{2}\times(m-1)}
\end{pmatrix}$.
\end{center}
The matrix $M_{43}$ is a bit more complicated. In order to describe it, let us set
\begin{center}
$P_{s\times s}=\begin{pmatrix}
1&0&0&\cdots&0&0&0\\
1&1&0&\cdots&0&0&0\\
\vdots&\vdots&\vdots&&\vdots&\vdots&\vdots\\
0&0&0&\cdots&1&1&0\\
0&0&0&\cdots&0&1&1
\end{pmatrix}$ and
$Q_{(s-1)\times s}=\begin{pmatrix}
1&1&0&\cdots&0&0\\
0&1&1&\cdots&0&0\\
\vdots&\vdots&\ddots&\ddots&\vdots&\vdots\\
0&0&\cdots&1&1&0\\
0&0&\cdots&0&1&1
\end{pmatrix}$.
\end{center}
Then $M_{43}$ can be written as
\begin{center}
$M_{43}=\begin{pmatrix}
P_{(m-1)\times(m-1)}&\textbf{0}&\cdots&\textbf{0}&\textbf{0}\\
Q_{(m-2)\times(m-1)}&P_{(m-2)\times(m-2)}&\cdots&\textbf{0}&\textbf{0}\\
\vdots&\ddots&\ddots&\vdots&\vdots\\
\textbf{0}&\cdots&Q_{2\times3}&P_{2\times2}&\textbf{0}\\
\textbf{0}&\textbf{0}&\cdots&Q_{1\times2}&P_{1\times1}
\end{pmatrix}$.
\end{center}
Notice that the matrix $M_{43}$ is a lower triangular matrix with all the main diagonal elements being 1, thus we can transform $M_{43}$ into an identity matrix $M_{43}'=I_{\frac{m^2-m}{2}\times\frac{m^2-m}{2}}$ by a sequence of elementary column transformations. The key point here is, after these column transformations, the matrices $M_{13}$ and $M_{23}$ are transformed into
\begin{center}
$M_{13}'=\begin{pmatrix}
\textbf{0}_{1\times(m-1)}&\textbf{1}_{1\times\frac{m^2-3m+2}{2}}
\end{pmatrix}$, and 
$M_{23}'=\begin{pmatrix}
I_{(m-1)\times(m-1)}&X_{(m-1)\times\frac{m^2-3m+2}{2}}
\end{pmatrix}$.
\end{center}
Now let us use the first $m-1$ columns of $M_{43}'$, which is an identity matrix, to eliminate the nonzero entries in $M_{41}$. Recall that the first $m-1$ rows of $M_{41}$ is exactly the matrix $M_{21}$ and the first $m-1$ columns of $M_{23}'$ is also an identity matrix. It immediately follows that when $M_{41}$ becomes a zero matrix, the zero sub-matrix in the second row and first column of the block form of $M_{L''}$ will be transformed into $M_{21}$. Since the first $m-1$ entries in $M_{13}'$ are all zeros, these operations preserve the block $M_{11}$ in $M_{L''}$. Since now $M_{41}$ is a zero matrix, we can use the identity matrix $M_{43}'$ to eliminate all the nonzero entries in $M_{13}'$ and $M_{23}'$. Finally, we find that
\begin{center}
$\text{rank}(M_{L''})=\text{rank}\begin{pmatrix}
M_{11}&M_{12}&\textbf{0}\\
M_{21}&M_{22}&\textbf{0}\\
M_{31}&M_{32}&\textbf{0}\\
\textbf{0}&\textbf{0}&I_{\frac{m^2-m}{2}\times\frac{m^2-m}{2}}
\end{pmatrix}=\text{rank}\begin{pmatrix}
M_{L'}&\textbf{0}\\
\textbf{0}&I_{\frac{m^2-m}{2}\times\frac{m^2-m}{2}}
\end{pmatrix}$,
\end{center}
which implies that
\begin{center}
$\text{rank}(M_{L''})=\text{rank}(M_{L'})+\frac{m^2-m}{2}$.
\end{center}
The proof is finished.
\end{proof}

\section{Which crossing points are region crossing change admissible?}\label{section4}
Theorem \ref{Theorem1} and Theorem \ref{Theorem2} tell us the number of equivalence classes of all the link diagrams on a connected closed surface sharing the same shadow, modulo region crossing changes. In order to complete the classification, we need to figure out that for an arbitrary selection of crossing points, whether these chosen crossing points are region crossing change admissible. For link diagrams on the plane, it was first noticed by the first-named author \cite{Che2013} that if $L=K_1\cup\cdots\cup K_n$ is an $n$-component link diagram then for any $c_1\in K_{i_1}\cap K_{i_2}, c_2\in K_{i_2}\cap K_{i_3}, \cdots, c_{p-1}\in K_{i_{p-1}}\cap K_{i_p}$ and $c_p\in K_{i_p}\cap K_{i_1}$, then $c_1, \cdots, c_p$ are region crossing change admissible. Conversely, if some crossing points are region crossing change admissible, then they can be written as the union of finitely many such ``cyclically-chosen" crossing points. Recently, Kengo Kawamura gave another diagrammatic characterization for the subsets of crossing points that are region crossing change admissible \cite{Kaw2021}. In this subsection, we mainly follow the idea of \cite[Theorem 3.2.1]{Kaw2021} and extend it to link diagrams on closed surfaces.

\begin{remark}
Note that determining whether some crossing points are region crossing change admissible is equivalent to find a basis for the range space of $M_L^T$. Here $M_L^T$ denotes the transpose of $M_L$. A closely related question is to find ineffective sets of regions with respect to region crossing change, which is equivalent to find a basis for the null space of $M_L^T$. For a given link diagram, a set of regions is called \emph{ineffective} if region crossing changes on these regions change no crossing point. For example, with respect to the double counting rule, for a knot diagram with a checkerboard coloring, the set of all black regions and the set of all white regions are both ineffective. Actually, these two sets are the only two nontrivial ineffective sets of regions. The reader is referred to \cite{IS2016} and \cite{CMR2020} for a complete characterization of ineffective sets of regions.
\end{remark}

Consider a link diagram $L$ on a connected closed surface $F$, which may be orientable or nonorientable. By a \emph{semi-arc} of $L$, we mean a line segment of $L$ between two adjacent crossing points. Fix a set $P$ of crossing points, following \cite{Kaw2021}, a \emph{bi-coloring} of $(L, P)$ is a map $\phi: SC(L)\to\mathbb{Z}_2$ which assigns an element of $\mathbb{Z}_2$ to each semi-arc such that for each crossing point $c\in P$, each pair of opposite semi-arcs receive distinct colors, and for each crossing point $c\notin P$, each pair of opposite semi-arcs receive the same color. Here $SC(L)$ denotes the set of semi-arcs of $L$. See Figure \ref{figure7}.

\begin{figure}[h]
\centering
\includegraphics{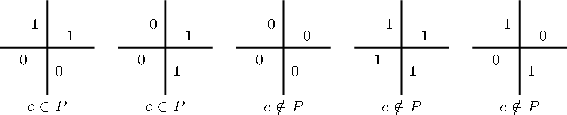}\\
\caption{Bicoloring at a crossing point $c$}\label{figure7}
\end{figure}

For a given bi-coloring $\phi$ of $(L, P)$, consider all the semi-arcs colored by 1, which is the union of several immersed closed curves on $F$. Let us denote the corresponding homology class of it by $[\phi]\in H_1(F; \mathbb{Z}_2)$. Then we have the following result.

\begin{theorem}
Let $L$ be a link diagram on a connected closed surface $F$, $P\subseteq C(L)$ a subset of the set of crossing points, then $P$ is region crossing change admissible if and only if there exists a bi-coloring $\phi: SC(L)\to\mathbb{Z}_2$ such that $[\phi]=0\in H_1(F;\mathbb{Z}_2)$. 
\end{theorem}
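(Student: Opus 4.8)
The plan is to reinterpret both notions---region crossing change admissibility and bi-colorings---inside the cellular chain complex with $\mathbb{Z}_2$ coefficients determined by $L$. Since $L$ is cellularly embedded in $F$, the crossings, semi-arcs, and regions are the $0$-, $1$-, and $2$-cells of a CW structure on $F$, giving a complex $C_2 \xrightarrow{\partial_2} C_1 \xrightarrow{\partial_1} C_0$ over $\mathbb{Z}_2$ whose homology is $H_*(F;\mathbb{Z}_2)$. A map $\phi\colon SC(L)\to\mathbb{Z}_2$ is exactly a $1$-chain, whose support is the set of $1$-colored semi-arcs, and a subset $S$ of regions is a $2$-chain $\chi_S\in C_2$. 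The first step is to record two local computations at a crossing $c$ with cyclically arranged corner regions $R_1,R_2,R_3,R_4$ and semi-arcs $s_1,s_2,s_3,s_4$ (so that $s_1,s_3$ and $s_2,s_4$ are the two opposite pairs): first, $\phi\in\ker\partial_1$ if and only if $\phi(s_1)+\phi(s_2)+\phi(s_3)+\phi(s_4)=0$ at every $c$, equivalently $\phi(s_1)+\phi(s_3)=\phi(s_2)+\phi(s_4)$; second, the boundary $\partial_2\chi_S$, which colors a semi-arc by the parity of the number of its two adjacent regions lying in $S$, satisfies $(\partial_2\chi_S)(s_1)+(\partial_2\chi_S)(s_3)=S(R_1)+S(R_2)+S(R_3)+S(R_4)$.

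These identities give the dictionary. From the first, a $1$-chain $\phi$ is a bi-coloring of $(L,P)$ for some $P$ precisely when $\phi\in Z_1:=\ker\partial_1$, in which case $P$ is forced to be $P_\phi:=\{c:\phi(s_1^c)+\phi(s_3^c)=1\}$; conversely every cycle is a bi-coloring of $(L,P_\phi)$. From the second identity, together with the description of region crossing change by the incidence matrix under the double counting rule, the crossing $c$ is switched by region crossing changes on $S$ exactly when $S(R_1)+S(R_2)+S(R_3)+S(R_4)=1$. Hence the set of crossings switched by $S$ equals $P_{\partial_2\chi_S}$, so $\phi_S:=\partial_2\chi_S$ is a bi-coloring of $(L,P_S)$, where $P_S$ is the set switched by $S$; ranging over all $S$ identifies the admissible subsets with $\{P_\phi:\phi\in B_1\}$, where $B_1:=\mathrm{im}\,\partial_2$.

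With the dictionary in place the theorem reduces to the definitional equality $B_1=\{\phi\in Z_1:[\phi]=0\in H_1(F;\mathbb{Z}_2)\}$. For the forward direction, if $P$ is admissible then $P=P_S$ for some $S$, and $\phi_S=\partial_2\chi_S$ is a bi-coloring of $(L,P)$ which, being a boundary, has $[\phi_S]=0$. For the converse, a bi-coloring $\phi$ of $(L,P)$ is a cycle with $P_\phi=P$; if $[\phi]=0$ then $\phi\in B_1$, so $\phi=\partial_2\chi_S$ for some region set $S$, whence $P=P_\phi=P_{\partial_2\chi_S}=P_S$ is switched by $S$ and is therefore admissible.

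The main work, and the only place demanding care, is the second local identity and its compatibility with $M_L$: I would verify corner by corner that the parity coloring induced by $S$ reproduces the double counting rule defining the incidence matrix, including the degenerate cases where one region occupies several corners of $c$ or borders a semi-arc on both sides (these contribute evenly and vanish mod $2$). The only other input is that $L$ is cellularly embedded, so that $C_\bullet$ computes $H_1(F;\mathbb{Z}_2)$ and $[\phi]=0$ is genuinely equivalent to $\phi$ bounding a union of regions; beyond this bookkeeping I expect no serious obstacle, since the homological content is carried entirely by $H_1=Z_1/B_1$. This is exactly where the surface enters: for diagrams on $S^2$ one has $H_1=0$, so $Z_1=B_1$ and every bi-coloring automatically satisfies $[\phi]=0$, recovering the planar criterion of \cite{Kaw2021}, whereas for higher genus or nonorientable $F$ the class $[\phi]\in H_1(F;\mathbb{Z}_2)$ is a genuine obstruction.
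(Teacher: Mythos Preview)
Your argument is correct and is essentially the paper's proof recast in the language of the cellular chain complex: where you write $\phi\in B_1=\operatorname{im}\partial_2$, the paper says the $1$-colored semi-arcs bound a compact subsurface $S\subset F$ (hence a union of regions), and applying region crossing changes on those regions realizes $P$; the converse is the same black/white colouring you describe as $\phi_S=\partial_2\chi_S$. Your local identities at a crossing are exactly the verifications the paper leaves implicit.

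The one place your formulation is slightly weaker is the explicit reliance on $L$ being cellularly embedded so that $C_\bullet$ computes $H_1(F;\mathbb Z_2)$. The paper does not assume this, and its geometric phrasing sidesteps the issue: a $\mathbb Z_2$-null-homologous $1$-cycle supported on $L$ bounds a compact subsurface of $F$, and any subsurface with boundary contained in $L$ is automatically a union of connected components of $F\setminus L$, i.e.\ a $2$-chain in your $C_2$, whether or not those components are disks. Thus the key equivalence $[\phi]=0\Leftrightarrow\phi\in B_1$ holds without cellularity, and your argument extends to arbitrary link diagrams once you replace ``$C_\bullet$ computes $H_1(F;\mathbb Z_2)$'' with this direct observation.
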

\begin{proof}
If there exists a bi-coloring $\phi$ and the union of all the semi-arcs colored by 1 give rise to a null homologous element in $H_1(F;\mathbb{Z}_2)$. Then there exists a compact subsurface $S\subset F$ (possibly disconnected) whose boundary $\partial S$ coincides with the union of all 1-colored semi-arcs. According to the definition of the bi-coloring, it is easy to observe that applying region crossing changes on the regions in $S$ exactly switches the crossing points in $P$. Therefore, $P$ is region crossing change admissible.

Conversely, if $P$ is region crossing change admissible, then there exist a family of regions such that region crossing changes on them exactly change the crossing points in $P$. Now let us color these regions black and color the remaining regions white. For a given semi-arc, if the two regions adjacent to it have the same color then we assign 0 to it, otherwise we assign 1 to it. It is not difficult to find that this provides a bi-coloring $\phi$ of $(L, P)$. Since the boundary of black regions consists of all the semi-arcs colored by 1, it follows that $[\phi]=0\in H_1(F;\mathbb{Z}_2)$.
\end{proof}

\section*{Acknowledgement}
Zhiyun Cheng was supported by the NSFC grant 12371065. Jingze Song was supported by a REU program at Beijing Normal University.

\bibliographystyle{amsplain}
\bibliography{}
\begin{bibdiv}
\begin{biblist}
\bib{AS2012}{article}{
	author={Kazushi Ahara},
    author={Masaaki Suzuki},
	title={An integral region choice problem on knot projection},
	journal={J. Knot Theory Ramifications},
	volume={21},
	date={2012},
	number={11},
	pages={1250119, 20 pp.}}

\bib{Aid1992}{article}{
	author={Haruko Aida},
	title={Unknotting operation for Polygonal type},
	journal={Tokyo J. Math.},
	volume={15},
	date={1992},
	number={1},
	pages={111--121}}

\bib{Ale1928}{article}{
AUTHOR = {Alexander, J. W.},
TITLE = {Topological invariants of knots and links},
JOURNAL = {Trans. Amer. Math. Soc.},
VOLUME = {30},
YEAR = {1928},
NUMBER = {2},
PAGES = {275--306}}

\bib{CCXZ2022}{article}{
    author={Jiawei Cheng},
    author={Zhiyun Cheng},
    author={Jinwen Xu},
    author={Jieyao Zheng},
	title={Region crossing change on surfaces},
	journal={Mathematische Zeitschrift},
    volume={300},
	date={2022},
    number={3},
	pages={2289--2308}}

\bib{CG2012}{article}{
	author={Zhiyun Cheng},
    author={Hongzhu Gao},
	title={On region crossing change and incidence matrix},
	journal={Science China Mathematics},
	volume={55},
	date={2012},
	number={7},
	pages={1487--1495}}

\bib{Che2013}{article}{
	author={Zhiyun Cheng},
	title={When is region crossing change an unknotting operation?},
	journal={Math. Proc. Cambridge Philos. Soc},
	volume={155},
	date={2013},
	number={2},
	pages={257--269}}

\bib{CMR2020}{article}{
	author={Miles Clikeman},
    author={Rachel Morris},
    author={Heather M. Russell},
	title={Ineffective sets and the region crossing change operation},
	journal={J. Knot Theory Ramifications},
	volume={29},
	date={2020},
	number={3},
	pages={2050010, 17 pp.}}

\bib{DR2018}{article}{
	author={Oliver Dasbach},
    author={Heather M. Russell},
	title={Equivalence of edge bicolored graphs on surfaces},
	journal={Electron. J. Combin.},
	volume={25},
	date={2018},
	number={1},
	pages={Paper 1.59, 15 pp.}}

\bib{HSS2015}{article}{
	author={Kenta Hayano},
    author={Ayaka Shimizu},
    author={Reiko Shinjo},
	title={Region crossing change on spatial-graph diagrams},
	journal={J. Knot Theory Ramifications},
	volume={24},
	date={2015},
	number={8},
	pages={1550045, 12 pp.}}

\bib{IS2016}{article}{
	author={Ayumu Inoue},
    author={Ryo Shimizu},
	title={A subspecies of region crossing change, region freeze crossing change},
	journal={J. Knot Theory Ramifications},
	volume={25},
	date={2016},
	number={14},
	pages={1650075, 9 pp.}}

\bib{Kaw2021}{article}{
	author={Kengo Kawamura},
	title={Region crossing change, bicolored diagram and Arf invariant},
	journal={J. Knot Theory Ramifications},
	volume={30},
	date={2021},
	number={5},
	pages={2150029, 17 pp.}}

\bib{Mur1985}{article}{
	author={Hitoshi Murakami},
	title={Some metrics on classical knots},
	journal={Math. Ann.},
	volume={270},
	date={1985},
	pages={35--45}}

\bib{MN1989}{article}{
	author={Hitoshi Murakami},
    author={Yasutaka Nakanishi},
	title={On a certain move generating link-homology},
	journal={Math. Ann.},
	volume={284},
	date={1989},
	pages={75--89}}

\bib{OST2024}{article}{
	author={Tokio Oshikiri},
    author={Ayaka Shimizu},
    author={Junya Tamura},
	title={Region crossing change on origami and link},
	journal={J. Knot Theory Ramifications},
	volume={33},
	date={2024},
    number={7},
	pages={2450019, 14 pp.}}

\bib{Sch1998}{article}{
	author={Martin Scharlemann},
	title={Crossing changes},
	journal={Chaos Solitons Fractals},
	volume={9},
	date={1998},
	number={4-5},
	pages={693--704}}

\bib{Shi2014}{article}{
	author={Ayaka Shimizu},
	title={Region crossing change is an unknotting operation},
	journal={J. Math. Soc. Japan},
	volume={66},
	date={2014},
	number={3},
	pages={693--708}}

\bib{ST2020}{article}{
	author={Ayaka Shimizu},
    author={Rinno Takahashi},
	title={Region crossing change on spatial theta-curves},
	journal={J. Knot Theory Ramifications},
	volume={29},
	date={2020},
	number={5},
	pages={2050028, 11 pp.}}
	
\end{biblist}
\end{bibdiv}
\end{document}